\theoremstyle{plain}
\newtheorem{theorem}{Theorem}
\newtheorem{proposition}[theorem]{Proposition}
\newtheorem{corollary}[theorem]{Corollary}
\newtheorem{conjecture}[theorem]{Conjecture}
\newtheorem{claim}[theorem]{Claim}
\theoremstyle{remark}
\newtheorem*{remark}{Remark}
\theoremstyle{definition}
\newtheorem{question}[theorem]{Question}
\def\le{\leqslant}
\def\leq{\leqslant}
\def\ge{\geqslant}
\def\geq{\geqslant}
\def\eps{\varepsilon}
\DeclareMathOperator{\probability}{Pr}
\DeclareMathOperator{\expectation}{E}
\DeclareMathOperator{\variance}{Var}
\DeclareMathOperator{\covariance}{Cov}
\begin{document} 
	
	\author[Pavez-Sign\'e]{Mat\'ias Pavez-Sign\'e$^1$}
	\author[Quiroz]{Daniel A. Quiroz$^2$}
	\address{$^1$:  Center for Mathematical Modeling (CNRS-IRL 2807), Universidad de Chile, Beauchef 851, Santiago, Chile.}
	\address{$^2$: Instituto de Ingenier\'ia Matem\'atica-CIMFAV, Universidad de Valparaiso, General Cruz 222, Valparaiso, Chile.}
	\author[Sanhueza-Matamala]{Nicol\'as Sanhueza-Matamala$^3$}
	\address{$^3$: The Czech Academy of Sciences, Institute of Computer Science, Pod Vod\'{a}renskou v\v{e}\v{z}\'{\i} 2, 182 07 Prague, Czech Republic.}
	
	\title{Universal arrays}
	
	\begin{abstract}
		A word on $q$ symbols is a sequence of letters from a fixed alphabet of size~$q$.
		For an integer $k\ge 1$, we say that a word $w$ is $k$-universal if, given an arbitrary word of length $k$, one can obtain it by removing letters from $w$.
		It is easily seen that the minimum length of a $k$-universal word on $q$ symbols is exactly $qk$.
		We prove that almost every word of size $(1+o(1))c_qk$ is $k$-universal with high probability, where $c_q$ is an explicit constant whose value is roughly $q\log q$.
		Moreover, we show that the $k$-universality property for uniformly chosen words exhibits a sharp threshold.
		Finally, by extending techniques of Alon [Geometric and Functional Analysis   27 (2017), no. 1, 1--32], we give asymptotically tight bounds for every higher dimensional analogue of this problem.
	\end{abstract}
	
	\maketitle
	
	\section{Introduction}
	
	
	A \emph{universal} mathematical structure is one which contains all possible substructures of a particular form. Famous examples of universal structures are De Bruijn sequences~\cite{DeBruijn}, which are periodic words that contain, exactly once, every possible word of a fixed size as a substring. Universal structures where perhaps first considered in a general sense by Rado~\cite{Rado1964}, who studied the existence of universal graphs, hypergraphs and functions for various notions of containment.
	
	The study of universal (finite) graphs has received particular attention, and for these the containment relation of choice has been that of induced subgraphs.
	Thus a graph $G$ is said to be \emph{$k$-universal} if $G$ contains every graph on $k$ vertices as an induced subgraph.
	Two problems have been at the centre of the study of $k$-universal graphs.
	The first one is to determine $n$, the minimum value such that there exists a $k$-universal graph on $n$ vertices.
	In 1965, Moon~\cite{moon_1965} gave, through a simple counting argument, a lower bound of $2^{(k-1)/2}$ for that value of $n$.
	Recently, Alon~\cite{alon2017} showed that this lower bound is asymptotically tight, essentially settling this 50-year-old problem.
	More so, in a later paper, Alon and Sherman~\cite{alon2019} gave an asymptotically tight bound for the hypergraph generalisation of this problem.
	The second central problem in the study of $k$-universal graphs is the ``random'' analogue of the previous question, that is, finding the minimum $n$ such that ``almost every" $n$-vertex graph is $k$-universal.
	After works of Bollob\'as and Thomason~\cite{bollobas1981}, and Brightwell and Kohayakawa~\cite{brightwell1993}, Alon~\cite{alon2017} has essentially settled this problem as well.
	
	Finding a $k$-universal graph is equivalent to finding an adjacency matrix which ``contains'' the adjacency matrices of all $k$-vertex graphs. Here we are considering that an adjacency matrix $M$ contains another matrix $M'$, if we can obtain $M'$ from $M$ by iteratively applying the following operation: choose a value $i$ and delete the $i$-th row and the $i$-th column.
	It is thus natural to consider square matrices together with the notion of containment given by the operation of choosing values $i,j$ and deleting row~$i$ and column~$j$, and its associated notion of universality.
	More generally, we shall consider the analogue of this notion of containment for ``$d$-dimensional arrays'' for all $d\ge 1$.
	
	In what follows, we use the common notation $[k]=\{1, \dotsc ,k\}$, for any integer $k\ge 1$.
	Given an alphabet $\Sigma$, a positive integer $d$, and a $d$-tuple $\mathbf{n} = (n_1, \dotsc, n_d) \in \mathbb{N}^d$,
	a \emph{$d$-dimensional array of size $\mathbf{n}$ over $\Sigma$} is a collection of symbols $a_{\mathbf{i}} \in \Sigma$ indexed by the vectors $\mathbf{i} \in [n_1] \times [n_2] \times \dotsb \times [n_d]$.
	With regard to the alphabet $\Sigma$ we are only interested in its cardinality, and will assume $\Sigma=[q]$, whenever $|\Sigma|=q$. Thus $\Sigma$ will usually be clear from the context and, for short, we will just talk about \emph{$d$-arrays} of a certain size. 
	A $d$-array of \emph{order $n$} is a $d$-array of size $(n, n, \dotsc, n)$.
	Note that $1$-arrays of size $n$ are commonly referred to as \emph{words} of \emph{length~$n$}.
	
	For general $d$-arrays and a corresponding notion of universality, we study the analogue of the two questions settled by Alon in the graph case (the ``deterministic'' and ``random'' questions).
	Whenever $d \ge 2$, we obtain asymptotically tight bounds for both questions (see Theorem~\ref{thm:2} and Corollary~\ref{coro}, below) by extending a method used by Alon in the graph case.
	However, this technique does not seem (directly) to work when $d=1$, that is, for the case of words.
	For this case we develop different tools which allow us to show tight bounds for both problems (see Theorems~\ref{theorem:universalwords-deterministic} and~\ref{theorem:universalwords-random}). 
	
	Let us first define the notion of containment we will consider for general $d$-arrays, which is a generalisation of the containment notion for matrices discussed above.
	For fixed $d$, let $A = (a_{\mathbf{i}})$ be a $d$-array of size $(n_1, \dotsc, n_d)$.
	We define the \emph{coordinate restriction operation} on $A$ as follows.
	Choose some $j \in [d]$ and $\ell \in [n_j]$.
	Delete all the symbols whose $j$-th coordinate is $\ell$, to obtain a $d$-array of size $(n_1, n_2,  \dotsc, n_{j-1}, n_j - 1, n_{j+1}, \dotsc, n_d)$.
	We say a $d$-array $A$ \emph{contains} a $d$-array $A'$ if we can obtain $A'$ by iteratively applying coordinate restriction operations, and consider universal $d$-arrays under this containment notion. 
	
	For fixed $d,k\ge 1$, and a fixed alphabet $\Sigma$, we say a $d$-array over~$\Sigma$  is \emph{$k$-universal} if it contains every $d$-array $A$ on $\Sigma$ of size $(n_1, n_2, \dotsc , n_d)$, where $n_j \le k$ for all $j\in [d]$.
	Note that if we want to show that a given $d$-array is $k$-universal, it is enough to show that it contains every $d$-array of order $k$.
	We let $f_{d}(q,k)$ be the minimum $n$ such that there exists a $k$-universal $d$-array of order $n$ over the $q$-symbol alphabet. 

	
	It is important to distinguish the notion of containment we consider for the case of words from that considered in De Bruijn sequences.
	We obtain a smaller word from a larger one by deleting entries (this notion is commonly called \emph{subword} or \emph{subsequence}), while De Bruijn (cyclic) sequences contain each smaller word in a contiguous manner (a notion usually called \emph{substring} or \emph{factor}).
	In particular, a substring is always a subword but not vice versa.
	From a combinatorial perspective, the notion of subword is just as natural, and has been considered in various extremal problems.
	Examples include the ``twins problem"~\cite{APP,bukh2016twins,bukh2014longest} which asks for two disjoint identical subwords of a given word which leave as few unused symbols as possible.
	Another example is the ``longest common subsequence problem"\cite{chvatal, kiwi2005expected}, which seeks the expected value of the longest common subsequence of two randomly chosen words.
	The notion of subword also has additional desirable properties.
	For instance, the subword density of a fixed length word is continuous with respect to the cut-distance in the limit theory for words sequences~\cite{HKPS}.
	
	Our results in the case of words are the following. 
	
	\begin{theorem} \label{theorem:universalwords-deterministic}
		Let $k \ge 1$ and $q \ge 2$ be integers. Then $f_1(q,k) = qk$.
	\end{theorem}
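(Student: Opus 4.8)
The plan is to prove the two inequalities $f_1(q,k)\le qk$ and $f_1(q,k)\ge qk$ separately; both arguments are short.

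For the upper bound I would exhibit an explicit $k$-universal word of length $qk$: take $w=(1\,2\,\cdots\,q)^{k}$, the concatenation of $k$ consecutive blocks each equal to the word $1\,2\,\cdots\,q$. By the remark following the definition of universality it is enough to check that $w$ contains every word $u=u_1u_2\cdots u_k$ of length exactly $k$. Since each of the $k$ blocks contains every symbol of $[q]$ exactly once, we may pick $u_i$ from the $i$-th block; reading these choices from left to right exhibits $u$ as a subword of $w$. Hence $f_1(q,k)\le qk$.

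For the lower bound I would argue by induction on $k$. The base case $k=1$ is immediate: a $1$-universal word must contain each of the $q$ symbols, so it has length at least $q$. For the inductive step, let $w$ be a $k$-universal word with $k\ge 2$, and let $B$ be the shortest prefix of $w$ containing all $q$ symbols (it exists because $w$, being $k$-universal, contains every one-letter word). Then $|B|\ge q$, and by minimality the last symbol $c$ of $B$ occurs nowhere in $w$ before the final position of $B$. Write $w=Bw'$. I claim $w'$ is $(k-1)$-universal: given any word $u'=u'_1\cdots u'_{k-1}$, apply $k$-universality of $w$ to the length-$k$ word $c\,u'_1\cdots u'_{k-1}$; in any embedding of this word into $w$ as a subword, the image of the first letter $c$ must be the final position of $B$ or later, so the images of $u'_1,\dots,u'_{k-1}$ all lie in $w'$, yielding an embedding of $u'$ into $w'$. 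By the induction hypothesis $|w'|\ge q(k-1)$, whence $|w|=|B|+|w'|\ge q+q(k-1)=qk$, so $f_1(q,k)\ge qk$.

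No single step is a real obstacle. The only point requiring care is the observation driving the inductive step — that in every embedding the first letter $c$ is forced past the block $B$ — which uses precisely the minimality of $B$: since $B$ with its last letter removed still fails to contain all symbols, the unique symbol it misses is $c$, so the first occurrence of $c$ in $w$ is exactly the last position of $B$. Everything else is bookkeeping.
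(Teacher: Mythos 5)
Your proof is correct. The upper bound is exactly the paper's: the word $(1\,2\cdots q)^k$ with the greedy block-by-block embedding. For the lower bound you use the same key observation as the paper --- the minimal prefix $B$ containing all $q$ symbols has length at least $q$, and its last letter $c$ occurs for the first time at the end of $B$, so any embedding of a word starting with $c$ is forced past $B$ --- but you package it as a direct induction on $k$, peeling one block at a time, whereas the paper first proves the stronger iff-characterisation (Proposition~\ref{proposition:universalword-characterisation}: $w$ is $k$-universal if and only if $\nu_\Sigma(w)\ge k$, via the $\Sigma$-universal decomposition) and then reads off the bound since each block of the decomposition has length at least $q$. Your induction is self-contained and argues ``universal $\Rightarrow$ long'' positively, while the paper argues the corresponding direction by constructing an explicitly avoided word $\sigma_1\cdots\sigma_{k'+1}$ when $\nu_\Sigma(w)<k$; the paper's extra generality is not wasted, though, since the characterisation through $\nu_\Sigma$ is what drives the coupon-collector coupling in the proof of the random result (Theorem~\ref{theorem:universalwords-random}), which your inequality alone would not provide.
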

	
	This result establishes the gap between the notions of subword and substring.
	While a minimal $k$-universal word has size $qk$, a De Bruijn sequence has size $\Theta(q^k)$.
	
	We also obtain the following ``threshold'' behaviour for randomly chosen words to be $k$-universal.
	For any $q \geq 1$, let $H_q = \sum_{i=1}^q 1/i$ be the $q$th harmonic number.
	
	\begin{theorem} \label{theorem:universalwords-random}
		Let $q \ge 2$ be a fixed integer and 
		$c_q = q H_q$.
		Consider a uniformly chosen word $w$ of length $n = n(k)$ over the $q$-symbol alphabet.
		For every $\eps > 0$ we have
		\[ \probability[\,\text{$w$ is $k$-universal}\:] \rightarrow
		\begin{cases}
			0 & \text{if } n \leqslant (1 - \eps) c_q k, \text{ and} \\
			1 & \text{if } n \geqslant (1 + \eps) c_q k,
		\end{cases} \]
		where the limit is taken as $k \rightarrow \infty$.
	\end{theorem}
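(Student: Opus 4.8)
The plan is to characterise \emph{exactly} the largest $k$ for which a given word is $k$-universal, and then to recognise this quantity as the number of renewals of a coupon-collector process, at which point the threshold follows from a second-moment estimate. Fix a word $w = w_1 \cdots w_n$ over $[q]$ and decompose it greedily from the left into \emph{blocks}: let $T_1$ be the least index such that $w_1\cdots w_{T_1}$ contains all $q$ symbols, let $T_2 > T_1$ be the least index such that $w_{T_1+1}\cdots w_{T_2}$ contains all $q$ symbols, and so on; write $N(w)$ for the number of complete blocks obtained this way, so that by definition the leftover segment $w_{T_{N(w)}+1}\cdots w_n$ omits at least one symbol. I claim that $w$ is $k$-universal if and only if $N(w)\ge k$.

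One direction is immediate: if $N(w)\ge k$, then an arbitrary target word $u=u_1\cdots u_\ell$ with $\ell\le k$ embeds into $w$ by matching each $u_i$ to an occurrence of that symbol inside the $i$-th block, which exists since every block contains all $q$ symbols. For the converse, suppose $m:=N(w)<k$; I produce a word of length $m+1\le k$ that is not a subword of $w$. For $i\le m$ let $c_i$ be the last symbol of the $i$-th block; by minimality, the prefix $w_{T_{i-1}+1}\cdots w_{T_i-1}$ of that block omits exactly the symbol $c_i$ (adding the single letter $w_{T_i}$ completes the alphabet), so the first occurrence of $c_i$ in the suffix $w_{T_{i-1}+1}\cdots w_n$ is at position $T_i$. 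Let $c_{m+1}$ be any symbol absent from the leftover $w_{T_m+1}\cdots w_n$. Using the standard fact that if a word embeds then its leftmost greedy embedding succeeds, the only candidate embedding of $u=c_1c_2\cdots c_{m+1}$ into $w$ is forced to place $c_i$ on position $T_i$ for each $i\le m$, leaving $c_{m+1}$ to be matched within the leftover, which is impossible; hence $u$ is not a subword of $w$, so $w$ is not $k$-universal.

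It remains to study $N(w)$ when $w$ is uniformly random (equivalently, the first $n$ letters of an i.i.d.\ uniform sequence). By memorylessness, the successive block lengths $C_1=T_1$, $C_2=T_2-T_1,\dots$ are i.i.d., each distributed as the time $C$ to collect all $q$ coupons when drawing uniformly with replacement; in particular $C$ is a sum of $q$ independent geometric variables, so $\expectation[C]=qH_q=c_q$ and $\sigma^2:=\variance(C)$ is a finite constant depending only on $q$. Setting $S_k=C_1+\dots+C_k$, we have $\{N(w)\ge k\}=\{S_k\le n\}$, hence $\probability[\text{$w$ is $k$-universal}]=\probability[S_k\le n]$. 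Since $\expectation[S_k]=c_qk$ and $\variance(S_k)=\sigma^2 k$, Chebyshev's inequality gives $\probability[\,|S_k-c_qk|\ge \eps c_q k\,]\le \sigma^2/(\eps^2 c_q^2 k)\to 0$. Consequently, if $n\le(1-\eps)c_qk$ then $\probability[S_k\le n]\le \probability[S_k\le (1-\eps)c_qk]\to 0$, while if $n\ge(1+\eps)c_qk$ then $\probability[\text{$w$ not $k$-universal}]=\probability[S_k>n]\le\probability[S_k>(1+\eps)c_qk]\to 0$. This yields the stated threshold.

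The content of the argument is concentrated in the combinatorial characterisation, and within it the converse direction, where one must check that the greedy embedding of the witness word $c_1\cdots c_{m+1}$ is genuinely forced; once that is in place the probabilistic part is a routine renewal/coupon-collector computation for which even Chebyshev suffices, and no further obstacle is expected. (If one wanted quantitative control on the width of the threshold window --- which the statement does not demand --- one would replace Chebyshev by a Bernstein-type bound exploiting the exponential tails of the geometric increments, which is the only place where more care would be needed.)
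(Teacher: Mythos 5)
Your proof is correct and follows essentially the same route as the paper: the same greedy block decomposition and the same witness word $c_1\cdots c_{m+1}$ for the characterisation (the paper's Proposition on $\nu_\Sigma$), followed by the identification of the block count with a coupon-collector renewal process. The only cosmetic difference is that you conclude with Chebyshev where the paper invokes the weak law of large numbers, which amounts to the same estimate.
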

	
	In particular, for the $2$-symbol alphabet, we have $f_1(2,k) = 2k$, while roughly $3k$ symbols are necessary and sufficient for a typical binary word of that length to be $k$-universal.
	This last statement answers a question of Biers-Ariel, Godbole and Kelley~\cite{BGK18}.
	
	The following theorem and its corollary are our results for general $d$-arrays with $d\ge 2$.
	
	\begin{theorem}\label{thm:2}
		Let $d, q \ge 2$ be fixed integers. For every $\eps>0$, a uniformly chosen $d$-array  of order $n=(1+\eps)\frac keq^{ \frac{k^{d-1}}{d}}$ over the $q$-symbol alphabet is $k$-universal with high probability as $k\to\infty$.
	\end{theorem}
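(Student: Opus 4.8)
The plan is a second–moment argument for a single target array, combined with a union bound, extending Alon's method~\cite{alon2017}. Two reductions come first. By the remark preceding the theorem it suffices to show that a uniformly chosen $d$-array $A$ of order $n$ contains every $d$-array of order $k$ over $[q]$, and there are exactly $q^{k^d}$ of these. Moreover the property ``$A$ is $k$-universal'' is monotone in $n$: restricting a uniform $d$-array of order $n+1$ to $[n]^d$ yields a uniform $d$-array of order $n$, and containment is transitive; hence it is enough to prove the theorem for all sufficiently small $\eps>0$, larger $\eps$ following immediately. So fix a small $\eps>0$ and a target $d$-array $B$ of order $k$, and call a tuple $\mathbf S=(S_1,\dotsc,S_d)$ of $k$-subsets of $[n]$ a \emph{copy} of $B$ if $A$ restricted to $S_1\times\dotsb\times S_d$ (each $S_j$ in increasing order) equals $B$; then $A$ contains $B$ exactly when some copy of $B$ exists. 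Let $X=X_B$ count the copies of $B$ in $A$.

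The first step is a first–moment estimate: each of the $\binom nk^d$ candidate tuples is a copy with probability $q^{-k^d}$ (over $k^d$ independent entries), so $\mu:=\expectation X=\binom nk^d q^{-k^d}$. Since $n/k\to\infty$ super-exponentially fast, Stirling gives $\binom nk=(1+o(1))(en/k)^k/\sqrt{2\pi k}$, and with $en/k=(1+\eps)q^{k^{d-1}/d}$ this yields $\mu=(1+o(1))(1+\eps)^{dk}(2\pi k)^{-d/2}$. The crucial point is that $\mu$ grows \emph{exponentially} in $k$ — so $\mu\ge(1+\eps/2)^{dk}$ for $k$ large — whereas $\log q^{k^d}=k^d\log q$ grows only \emph{polynomially}; hence a tail bound of the shape $\probability[X=0]\le e^{-\Omega(\mu)}$ comfortably survives a union bound over all $q^{k^d}$ targets.

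To obtain such a bound I would apply Janson's inequality — in the form valid for events each determined by a set of independent variables, with dependency graph given by shared variables; equivalently Suen's inequality — to the events $E_{\mathbf S}=\{A\text{ restricted to }\mathbf S\text{ equals }B\}$, giving
\[
\probability[X=0]\le\exp\!\left(-\frac{\mu^2}{2\Delta}\right),\qquad \Delta=\sum_{\mathbf S\cap\mathbf S'\neq\varnothing}\probability[E_{\mathbf S}\cap E_{\mathbf S'}],
\]
the sum over ordered pairs of candidate tuples sharing at least one cell (equivalently, $S_j\cap S'_j\neq\varnothing$ for every $j$). The diagonal terms contribute exactly $\mu$, so it remains to show that the off-diagonal part $\Delta_{\mathrm{od}}$ satisfies $\Delta_{\mathrm{od}}=o(\mu)$; granting this, $\probability[X=0]\le e^{-(1-o(1))\mu/2}\le e^{-(1+\eps/2)^{dk}/3}$ for $k$ large, and summing over targets gives $\probability[A\text{ not }k\text{-universal}]\le q^{k^d}e^{-(1+\eps/2)^{dk}/3}=o(1)$, which is the theorem.

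The main work, and the step I expect to be the chief obstacle, is the estimate $\Delta_{\mathrm{od}}=o(\mu)$. Writing $a_j=|S_j\cap S'_j|\in\{1,\dotsc,k\}$ and using $\probability[E_{\mathbf S}\cap E_{\mathbf S'}]\le q^{-(2k^d-\prod_j a_j)}$, one gets
\[
\frac{\Delta_{\mathrm{od}}}{\mu}\le\sum_{\substack{(a_1,\dotsc,a_d)\neq(k,\dotsc,k)\\ a_j\ge1}}\ \prod_{j=1}^{d}\binom{k}{a_j}\binom{n-k}{k-a_j}\ q^{\prod_j a_j-k^d}.
\]
Bounding $\binom k{a_j}\binom{n-k}{k-a_j}\le k^{\min(a_j,\,k-a_j)}\,n^{k-a_j}/(k-a_j)!$ and using $\log_q n=k^{d-1}/d+O(\log k)$, the $q$-exponent of a term is $k^d\,\phi(t_1/k,\dotsc,t_d/k)$ up to a correction that is $O(k\log k)$ in the bulk and $dk\log_q(1+\eps)+O(\log k)$ near $(a_1,\dotsc,a_d)=(1,\dotsc,1)$, where $t_j=k-a_j$ and $\phi(x_1,\dotsc,x_d)=\tfrac1d\sum_j x_j+\prod_j(1-x_j)-1$. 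A short calculus check shows $\phi\le 0$ on $[0,1]^d$, with equality only at the origin and at $(1,\dotsc,1)$, so the sum is governed by two regimes. When $\mathbf a$ is near $(k,\dotsc,k)$, decreasing any $a_j$ by one unit costs a factor $q^{-\Theta(k^{d-1})}$ while gaining only a polynomial factor, so these terms sum to $o(1)$; all bulk tuples contribute $q^{-\Omega(k^d)}$ each and are negligible (for $d\ge 2$, $k^d\gg k\log k$). The delicate regime is $\mathbf a$ near $(1,\dotsc,1)$, where the extremal term is essentially $(1+\eps)^{dk}q^{-k^{d-1}}$: this tends to $0$ for every $\eps>0$ when $d\ge 3$, but for $d=2$ it requires $(1+\eps)^2<q$, i.e.\ $\eps<\sqrt q-1$ — which is exactly what the reduction to small $\eps$ provides. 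Carrying out these estimates cleanly — keeping enough of the factorials in the Stirling bounds so that the lower-order corrections stay below the $q^{-\Theta(k^{d-1})}$ savings, and treating the $(1,\dotsc,1)$-corner by hand — is the technical heart of the argument, but requires no idea beyond a careful extension of Alon's method.
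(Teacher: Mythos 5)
Your proposal shares the paper's skeleton: fix a target array, compute $\mu=\binom nk^dq^{-k^d}$, prove the overlap estimate $\Delta_{\mathrm{od}}=o(\mu)$, and finish with a union bound over the $q^{k^d}$ targets; your reduction to small $\eps$ (via monotonicity of the restriction to a subarray) is exactly the paper's ``we may assume $\eps\le(\log q)/8$, as otherwise we can restrict to a smaller array''. Your treatment of the overlap sum via $\phi(x)=\tfrac1d\sum_jx_j+\prod_j(1-x_j)-1\le 0$ is a sound (if only sketched) substitute for the paper's cleaner device, the AM--GM bound $i_1\dotsb i_d\le\tfrac1d\sum_ji_j^d$, which decouples $\Delta_{\mathbf i}/\mu$ into a product of one-dimensional factors $L_d(i_j)$ handled in Claims~\ref{claim:Delta} and~\ref{claim:correlations}; be aware that your ``local'' heuristics near the corners (e.g.\ that lowering one $a_j$ gains only a polynomial factor) are not literally true coordinatewise, so the rigorous version really does need the global $\phi\le 0$ statement or the paper's decoupling. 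The genuine divergence is the endgame. The paper does not use a Janson-type bound at all: it shows $X\ge 3\mu/4$ and $Z\le\mu/4$ simultaneously with probability $\ge 1/2$ (Chebyshev and Markov), deduces that the maximum number $Y$ of pairwise disjoint copies satisfies $\probability[Y\ge\mu/2]\ge1/2$, and then applies Talagrand's inequality ($Y$ is $1$-Lipschitz and $f$-certifiable with $f(s)=sk^d$) to get $\probability[X=0]\le 2e^{-\mu k^{-d}/8}$, which suffices since $\mu\ge 16(\log q)k^{2d}$.

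The one point in your route that needs repair is the inequality you invoke. The events $E_{\mathbf S}$ are exact-match events in a random colouring, not up-sets in a product of Bernoulli variables, and Janson's inequality in the form $\probability[X=0]\le\exp(-\mu^2/(2\Delta))$ is not a valid theorem for arbitrary events under a mere dependency-graph hypothesis; this is presumably why the paper takes the Talagrand detour. The correct tool here is Suen's inequality (in Janson's refined form), $\probability[X=0]\le\exp\bigl(-\mu+\Delta_{\mathrm{od}}e^{2\delta}\bigr)$ with $\delta=\max_{\mathbf S}\sum_{\mathbf S'\sim\mathbf S}\probability[E_{\mathbf S'}]$, so in addition to $\Delta_{\mathrm{od}}=o(\mu)$ you must verify $\delta=o(1)$. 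That verification is essentially the corner computation you already did: $\delta$ is dominated by the profile $(1,\dotsc,1)$ and is of order $k^{O(1)}(1+\eps)^{dk}q^{-k^{d-1}}$, which is $o(1)$ precisely in the small-$\eps$ regime you reduced to --- but it has to be stated, as it is the price for dispensing with monotonicity. With that fixed (and the $\phi$-estimates written out in full), your argument works and even gives a stronger per-target bound $e^{-\Omega(\mu)}$ than the paper's $e^{-\mu k^{-d}/8}$; both comfortably beat the union bound, since $\mu=(1+o(1))(2\pi k)^{-d/2}(1+\eps)^{dk}$ is exponential in $k$ while $\log(q^{k^d})$ is only polynomial.
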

	
	Furthermore, a simple counting argument gives $f_d(q, k) \geqslant \frac{k}{e} q^{ \frac{k^{d-1}}{d}}$ (see Section 3).
	Thus we obtain the following.
	\begin{corollary}\label{coro}
		Let $d, q \ge 2$ be fixed integers. We have $f_d(q, k)= (1 + o_k(1)) \frac{k}{e} q^{ \frac{k^{d-1}}{d}}.$
	\end{corollary}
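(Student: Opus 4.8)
The corollary is simply the conjunction of two matching estimates: an upper bound, which is an immediate consequence of \Cref{thm:2}, and a lower bound, which comes from a direct counting argument. Since the upper bound is already essentially in hand, the plan is to spell out the counting lower bound and then combine the two.

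For the lower bound, let $A$ be any $k$-universal $d$-array of order $n$ over $[q]$. To pass from order $n$ to order $k$ one must apply $n-k$ coordinate restrictions in each of the $d$ dimensions, and the resulting order-$k$ array is completely determined by the choice, for each dimension $j \in [d]$, of the $k$-element subset $S_j \subseteq [n]$ of indices that survive; hence $A$ contains at most $\binom{n}{k}^d$ distinct $d$-arrays of order $k$. On the other hand, there are exactly $q^{k^d}$ $d$-arrays of order $k$ over $[q]$, and by $k$-universality all of them must be contained in $A$. Therefore $\binom{n}{k}^d \ge q^{k^d}$. Using the crude estimate $\binom{n}{k} \le (en/k)^k$ we get $(en/k)^{kd} \ge q^{k^d}$, and taking $(kd)$-th roots gives $en/k \ge q^{k^{d-1}/d}$, that is, $n \ge \frac{k}{e} q^{k^{d-1}/d}$. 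As this holds for every $k$-universal $d$-array of order $n$, we conclude $f_d(q,k) \ge \frac{k}{e} q^{k^{d-1}/d}$.

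For the upper bound, fix $\eps > 0$ and set $n = \bigl\lceil (1+\eps)\tfrac{k}{e} q^{k^{d-1}/d} \bigr\rceil$. By \Cref{thm:2}, a uniformly chosen $d$-array of order $n$ over $[q]$ is $k$-universal with high probability; in particular, for all sufficiently large $k$ this probability is positive, so a $k$-universal $d$-array of order $n$ exists, giving $f_d(q,k) \le n$. Since $\tfrac{k}{e} q^{k^{d-1}/d} \to \infty$, we have $n \le (1+2\eps)\tfrac{k}{e} q^{k^{d-1}/d}$ for all large $k$. As $\eps > 0$ was arbitrary, combining this with the lower bound yields $f_d(q,k) = (1+o_k(1))\tfrac ke q^{k^{d-1}/d}$.

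I do not expect any genuine obstacle in the corollary itself once \Cref{thm:2} is available: the only points that need a little care are the elementary justification that the number of order-$k$ subarrays of an order-$n$ array is \emph{at most} $\binom{n}{k}^d$ (distinct sequences of restrictions can yield the same array, so equality need not hold), and the harmless rounding to an integer when deducing the existence of an actual array from the real-valued bound in \Cref{thm:2}. All the substantive work lies in establishing \Cref{thm:2}.
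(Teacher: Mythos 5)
Your proposal is correct and follows exactly the paper's route: the lower bound is the same counting argument (at most $\binom{n}{k}^d$ order-$k$ subarrays versus $q^{k^d}$ arrays, then the bound $\binom{n}{k}\le (en/k)^k$), and the upper bound is extracted from \Cref{thm:2} via the existence of a $k$-universal array of the stated order. The added remarks about the inequality (rather than equality) in the subarray count and the rounding of $n$ are fine and do not change the argument.
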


	We point out that the cases  $d=1$ and $d\ge 2$ behave in completely different manners.
	In the case $d=1$, the case of words, the value of $n$ in the random version is considerably larger than $f_1(q,k)$ (a similar scenario holds for the graph case~\cite{alon2017}). In contrast, for $d$-arrays with $d\ge2$ the order which is necessary for random $d$-arrays to be $k$-universal is asymptotically equal to $f_d(q,k)$.
	
	The paper is organised as follows. The proofs of Theorem~\ref{theorem:universalwords-deterministic} and \ref{theorem:universalwords-random} are found in Section~\ref{secwords}.
	In Section~\ref{secarrays}, we prove Theorem~\ref{thm:2} and give the counting argument which implies Corollary~\ref{coro}.
	The paper ends with concluding remarks in Section~\ref{seclast}.
	
	\section{Universal words}\label{secwords}
	In this section we prove Theorems~\ref{theorem:universalwords-deterministic} and~\ref{theorem:universalwords-random}.
	We will use $\Sigma = [q]$ as the fixed $q$-symbol alphabet.
	We recall the standard notation used to work with words.
	Given a word $w$ and an integer $k$, $w^k$ is the $k$-fold concatenation of $w$ with itself $k$ times.
	We write $\Sigma^k$ for the set of all words of length $k$ over $\Sigma$ and $\Sigma^\ast$ for the set of all words over $\Sigma$.
	
	Although Theorem~\ref{theorem:universalwords-deterministic} can be proved directly,
	we will derive it using stronger tools, which we will need to prove Theorem~\ref{theorem:universalwords-random}.
	To introduce these tools, we begin with a few definitions.
	Given any word $w$ in $\Sigma^\ast$, define $U_{\Sigma}(w)$ as the minimal prefix of $w$ which contains all symbols of $\Sigma$ if it exists, or $U_{\Sigma}(w) = w$ otherwise.
	Define $T_{\Sigma}(w)$ as $w$ with the prefix $U_{\Sigma}(w)$ removed.
	Given a word $w$, we can greedily decompose it in a unique way as $w = u_1 u_2 \dotsb u_{\ell} u'$ such that for all $i \in [\ell]$, $u_i = U_{\Sigma}( u_i u_{i+1} \dotsb u_\ell u' )$ and $T_{\Sigma}(u_i u_{i+1} \dotsb u_\ell u') = u_{i+1} \dotsb u_{\ell} u'$, each $u_i$ contains all the symbols of $\Sigma$ and $u'$ (possibly empty) does not contain all the symbols of $\Sigma$.
	We say $u_1 u_2 \dotsb u_{\ell} u'$ is the \emph{$\Sigma$-universal decomposition of $w$} and we let $\nu_{\Sigma}(w) = \ell$.
	We can use $\nu_{\Sigma}(w)$ to characterise $k$-universal words, as follows.
	
	\begin{proposition} \label{proposition:universalword-characterisation}
		A word $w \in \Sigma^\ast$ is $k$-universal if and only if $\nu_{\Sigma}(w) \geqslant k$.
	\end{proposition}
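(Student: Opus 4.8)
The plan is to prove both directions using the combinatorial meaning of the $\Sigma$-universal decomposition $w = u_1 u_2 \dotsb u_\ell u'$, where $\ell = \nu_\Sigma(w)$. The key observation is that each block $u_i$ is, by construction, a minimal prefix (of the corresponding suffix) that contains every symbol of $\Sigma$; in particular, the last letter of each $u_i$ is a symbol that does not appear earlier in $u_i$, and every symbol of $\Sigma$ occurs in $u_i$.

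For the ``if'' direction, suppose $\nu_\Sigma(w) = \ell \geqslant k$, and let $v = v_1 v_2 \dotsb v_k$ be an arbitrary word in $\Sigma^k$. We greedily embed $v$ into $w$ as a subword: having already used up the prefix $u_1 \dotsb u_{i-1}$ to embed $v_1 \dotsb v_{i-1}$, we find $v_i$ inside $u_i$ (which is possible since $u_i$ contains every symbol of $\Sigma$), and continue. Since $\ell \geqslant k$ there are enough blocks, so $v$ is a subword of $w$. As $v$ was arbitrary and it suffices to check words of length exactly $k$ (a shorter word is a subword of a padded length-$k$ word, or one simply repeats this argument), $w$ is $k$-universal.

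For the ``only if'' direction, I will argue the contrapositive: if $\nu_\Sigma(w) = \ell \leqslant k-1$, then $w$ is not $k$-universal. Write the $\Sigma$-universal decomposition $w = u_1 \dotsb u_\ell u'$ and note that $u'$ does not contain all symbols of $\Sigma$, so there is a symbol $b$ missing from $u'$. Consider the word $v = a_1 a_2 \dotsb a_\ell b \in \Sigma^{\ell+1} \subseteq \Sigma^{k}$ (pad arbitrarily on the right up to length $k$ if needed, keeping the last letter chosen carefully), where the $a_i$ are chosen adversarially: set $a_i$ to be the \emph{last} letter of $u_i$. The claim is that $v$ (or its length-$k$ padding with the crucial structure preserved) cannot be embedded into $w$. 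The intuition is a counting/greedy argument: in any embedding of $v$ into $w$ as a subword, the initial segment needed to realise $a_1 a_2 \dotsb a_j$ must reach at least into block $u_{j}$ — more precisely, reading $v$ greedily from the left forces us to pass the boundary between $u_{j-1}$ and $u_j$ before we can place $a_j$, because $a_j$ is the last symbol of $u_j$ and placing it requires consuming all of $u_j$ in the worst case; making this precise requires showing that no embedding can ``save'' a whole block.

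The main obstacle is this last greedy/exchange argument: one must show rigorously that the adversarial choice $a_i = $ (last letter of $u_i$) forces every embedding to use up at least one full block per letter, so that $\ell$ blocks cannot accommodate $\ell + 1$ letters. I expect to handle it by an exchange/pushing argument: given any embedding of $v$, push each used position as far left as possible; then prove by induction on $j$ that the position used for $a_j$ lies strictly beyond the end of $u_{j-1}$, using minimality of $U_\Sigma$ in the definition of the blocks (if $a_j$ could be placed inside $u_{j-1} \dotsb$, then some shorter prefix of the relevant suffix would already contain all symbols, contradicting minimality). Once this is set up, the contradiction is immediate, and combining both directions yields the characterisation.
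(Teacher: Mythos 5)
Your overall strategy coincides with the paper's: the ``if'' direction is the same greedy one-letter-per-block embedding, and in the ``only if'' direction you pick exactly the paper's adversarial word, namely $a_1\dotsb a_\ell b$ with $a_i$ the last letter of $u_i$ and $b$ a symbol missing from $u'$. However, the one step you defer (``I expect to handle it by an exchange/pushing argument'') is precisely the content of the paper's proof, and the induction hypothesis you propose is too weak to close it: knowing only that the position matched to $a_j$ lies strictly beyond the end of $u_{j-1}$ places $a_j$ somewhere inside $u_j u_{j+1}\dotsb u'$, and that alone does not force $a_{j+1}$ past the end of $u_j$ --- a priori $a_{j+1}$ could also be matched inside $u_j$. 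The statement you must carry through the induction is stronger: in any embedding, $a_j$ is matched \emph{at or after the last position of $u_j$}. This follows at once from the observation you already made in your opening paragraph but never used in the induction: by minimality, $u_j = v_j a_j$ where $a_j$ does not occur in $v_j$, so the first occurrence of $a_j$ in the suffix $u_j u_{j+1}\dotsb u'$ is the final letter of $u_j$. Combined with the inductive hypothesis for $a_{j-1}$, this pins $a_j$ at or beyond the end of $u_j$, and hence forces $b$ to be matched inside $u'$, where it does not occur --- the desired contradiction. No ``push every position as far left as possible'' normalisation is needed; the induction works for an arbitrary embedding, which is how the paper argues.

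Two smaller points. First, the padding worry is a non-issue: since $\ell+1\leq k$, exhibiting a word of length $\ell+1$ not contained in $w$ already contradicts $k$-universality (the definition requires containment of all words of length at most $k$); alternatively, any right-padding of $v$ suffices, because containment of a padded word would imply containment of $v$ itself. Second, with the strengthened induction in place your argument becomes exactly the paper's, so the fix is a one-line adjustment rather than a new idea --- but as written, the central claim of the ``only if'' direction is asserted rather than proved, and the sketched induction would not go through in the stated form.
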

	
	\begin{proof}
		Suppose $w$ satisfies $\nu_{\Sigma}(w) \ge k$.
		Then $w$ has as a prefix a substring $u_1 u_2 \dotsb u_k$ where each of the words $u_i$ contains all of the symbols from $\Sigma$.
		Then any word $x \in \Sigma^k$ can be found greedily as a subword in $w$ by finding the $i$-th symbol of $x$ inside the word~$u_i$.
		
		In the other direction, suppose $\nu_{\Sigma}(w) = k' < k$ and let $w = u_1 \dotsb u_{k'} u'$ be the $\Sigma$-universal decomposition of $w$.
		Since each $u_i$ is a minimal prefix of $u_i \dotsb u_{k'} u'$ that contains all the symbols of $\Sigma$, it must have the form $u_i = v_i \sigma_i$, where $\sigma_i$ is a symbol in $\Sigma$ and $v_i$ does not use the symbol $\sigma_i$.
		Further, let $\sigma_{k' + 1}$ be any symbol in $\Sigma$ which does not appear in $u'$ (which exists by definition).
		We claim that $w$ does not contain the word $w' = \sigma_1 \sigma_2 \dotsb \sigma_{k'} \sigma_{k' + 1}$.
		Since $k'+1 \le k$, this readily implies that $w$ is not $k$-universal.
		
		To find a contradiction, suppose that $w'$ is contained in $w$.
		The first symbol of $w'$ is $\sigma_1$, and the first time $\sigma_1$ appears in $w$ is at the end of $u_1$, and thus the remaining symbols must appear after the end of $u_1$.
		That means the word $\sigma_2 \dotsb \sigma_{k'} \sigma_{k' + 1}$ is contained in $u_2 \dotsb u_{k'} u$.
		Using the same reasoning, we see that for all $j \leq k'$, the $j$-th symbol of $w'$ appears in $w$ only after the last symbol of $u_j$.
		Therefore, the last symbol of $w'$, which is $\sigma_{k' + 1}$, appears as a symbol in $u'$, a contradiction.
	\end{proof}
	
	Note that Theorem~\ref{theorem:universalwords-deterministic} follows trivially from Proposition~\ref{proposition:universalword-characterisation}.
	This is because any word containing all symbols from $\Sigma$ must have size at least $|\Sigma| = q$,
	thus a word $w$ with $\nu_\Sigma(w) \ge k$ must have least $qk$ symbols.
	Equality is attained by $(12 \dotsb q)^k$.
	
	Now, we will need to estimate $\nu_{\Sigma}(w)$ for a uniformly chosen random word $w$.
	We will appeal to the well-known ``coupon-collector problem''.
	Given a $q$-sized set $Q$ and a sequence $X_1, X_2, \dotsc$ of independent and uniformly chosen random variables $X_i \in Q$ for all $i \ge 1$, define the random variable $T$ as the minimum integer such that $\{ X_1, \dotsc, X_T \} = Q$.
	It is known that $T$ can be written as the sum of $q$ independent geometric random variables $T = G_1 + \dotsb + G_q$, where $G_j$ has parameter $j/q$ for each $j \in [q]$, and from this it is deduced that $\expectation[T] = c_q := q H_q$.

	Now we are ready for the proof of Theorem~\ref{theorem:universalwords-random}.
	
	\begin{proof}[Proof of Theorem~\ref{theorem:universalwords-random}]
		Let $\Sigma$ be an alphabet of size $q$.
		To estimate $\nu_\Sigma(w)$ of a random word $w$, we will couple $w$ with a word created from ``coupon-collector'' experiments, as follows.
		Define a random string $U \in \Sigma^\ast$ using the following process.
		Initially, let $U=\sigma_0$  be a word of length 1, where $\sigma_0$ is chosen uniformly from $\Sigma$.
		If $U$ already has all the symbols of $\Sigma$, stop.
		Otherwise, choose uniformly and independently a symbol $\sigma \in \Sigma$ and update~$U$ by appending $\sigma$ at the end.
		Clearly, the length $|U|$ of $U$ distributes as the random variable $T$ defined before the start of the proof and thus $\expectation[|U|] = c_q$.
		Given $k > 0$, let $U_1, \dotsc, U_{k}$ be independent random strings, each of them distributed as $U$, and let $U^{(k)} = U_1 U_2 \dotsb U_{k}$ be their concatenation.
		Crucially, we have $\nu_\Sigma(U^{(k)}) = k$, and each strict prefix $u$ of $U^{(k)}$ satisfies $\nu_\Sigma(u) < k$.
		
		Given $k, n > 0$, we construct a (random) word $w$ in $\Sigma^n$ as follows: if $|U^{(k)}| \ge n$ then let $w$ be the first $n$ symbols of $U^{(k)}$; otherwise, construct $w'$ from $U^{(k)}$ by appending $n - |U^{(k)}|$ fresh random symbols at the end of $U^{(k)}$. 
		Note that each symbol of $w$ is chosen independently and uniformly over the symbols of $\Sigma$, so $w$ corresponds exactly to a word on $\Sigma^n$ chosen uniformly at random.
		By construction it is clear that, for all $k, n > 0$,
		\begin{align}
			\probability[\,\text{$w$ is $k$-universal}\:]
			= \probability[ \nu_\Sigma(w) \ge k ]
			= \probability[ |U^{(k)}| \le n ],
			\label{equation:coupling}
		\end{align}
		where the first equality is due to Proposition~\ref{proposition:universalword-characterisation}.
		
		To estimate the last probability, note that $|U^{(k)}| = \sum_{i=1}^\ell |U_i|$ and recall that each of the $|U_i|$  
		has expectation equal to $c_q$.
		Thus, by the (Weak) Law of Large Numbers, we have that, for all $\eps > 0$,
		\begin{align}
			\probability[ (1 - \eps) c_q k \le |U^{(k)}| \le (1 + \eps) c_q k ] \rightarrow 1,
			\label{equation:couponconcentration}
		\end{align}
		whenever $k$ goes to infinity.
		In particular, if $n \leq (1 - \eps)c_q k$ then $\probability[ |U^{(k)}| \le n ] \rightarrow 0$; and if $n \geq (1 + \eps)c_q k$ then $\probability[ |U^{(k)}| \le n ] \rightarrow 1$. By~\eqref{equation:coupling}, the result follows.
	\end{proof}
	
	\begin{remark}
		Theorem~\ref{theorem:universalwords-random} admits an improvement over the size of the error term $\eps$, which can be replaced by any function in $\omega(k^{-1/2})$.
		We sketch the proof.
		We do the same as before, but instead of~\eqref{equation:couponconcentration} one should use that $\Pr[ | |U^{(k)}| - c_q k| > \omega(1) \sqrt{k} ] \rightarrow 0$, where $\omega(1)$ is any function that goes to infinity together with $k$.
		To prove this last statement, write each $|U_i|$ as the sum of $q$ geometric random variables $|U_i| = \sum_{j=1}^q G^{(i)}_j$, where $G^{(i)}_j$ has expectation~$q/j$.
		We want to bound the probability that $|U^{(k)}| - c_q k > \omega(1) \sqrt{k}$ holds (the event given by the ``reverse'' inequality can be treated analogously).
		If the inequality holds, then there exists $j \in [q]$ such that $\sum_{i=1}^k G^{(i)}_j - qk/j > \omega(1) \sqrt{k} / q$.
		A sum of independent geometric random variables follows a negative binomial distribution, which admits a Chernoff--Hoeffding-type deviation bound (see, e.g., \cite[Problem~2.4]{DubhashiPanconesi2009}), and using it gives the result.
	\end{remark}
	
	\section{Universal $d$-arrays}\label{secarrays}
	
	As before, let $\Sigma = [q]$ be the $q$-symbol alphabet.
	For integers $d,k\ge 1$, we write $\mathcal A_d(\Sigma,k)$ for the set of all $d$-arrays of order $k$ over $\Sigma$.
	In this section, we prove Theorem~\ref{thm:2} and establish the lower bound for $f_d(q,k)$ which implies Corollary~\ref{coro}.
	To do so, we first need the following well-known estimates for binomial coefficients, most of which follow from Stirling's approximation. 
	For all $n, k \ge 1$,
	\begin{align}
		k!\ge \left(\frac ke\right)^k\hspace{.5cm}\text{and}\hspace{.5cm}	\binom{n}{k} \le \left( \frac{en}{k} \right)^k. \label{equation:binomialsimpleupperbound}
	\end{align}
	Further, if $k \rightarrow \infty$ as $n\rightarrow \infty$, while $k = o(\sqrt{n})$, 
	\begin{align}
		\binom{n}{k} = (1 + o(1)) \frac{1}{\sqrt{2 \pi k}} \left( \frac{en}{k} \right)^k, \label{equation:stirling}
	\end{align}
	and if $k=\Omega(n)$ and $k \leq n/2$ then 
	\begin{align}
		\log_2\binom{n}{k} = (1 + o(1))H\left( \frac{k}{n} \right)n, \label{equation:entropy}
	\end{align}
	where $H(x)=-x\log_2x-(1-x)\log_2(1-x)$ is the binary entropy.
	
	The lower bound for $f_d(q,k)$ when $d\ge 2$ is given by the following counting argument.
	Notice that there are $q^{k^d}$ $q$-symbol $d$-arrays of order $k$.
	Therefore, a $q$-symbol $d$-array of order $n$ must satisfy
	\[\binom{n}{k}^d\ge q^{k^d}\]
	in order to contain all arrays of order $k$. By~\eqref{equation:binomialsimpleupperbound} and the definition of $f_d(q,k)$ we obtain
	\[\left(\frac{ef_d(q,k)}{k}\right)^{kd}\ge\binom{f_d(q,k)}{k}^d\ge q^{k^d},\]
	and thus we have
	\begin{align}\label{equation:lower}
		f_d(q,k)\ge \frac ke q^{k^{d-1}/d}.
	\end{align}
	In light of Theorem~\ref{theorem:universalwords-deterministic}, we know that for $d=1$ the lower bound obtained here is considerably far from being tight.
	But we will show that it is asymptotically tight for all $d\ge 2$.
	In fact, it is asymptotically tight for the random version of the problem.
	
	In order to prove Theorem~\ref{thm:2} we follow an approach taken by Alon~\cite{alon2017} in the study of universal graphs.
	Before diving into the proof let us first give a rough outline.
	
	Given $k\in\mathbb N$ sufficiently large and $n=(1+o(1))\frac ke q^{k^{d-1}/d}$,
	let $A\in \mathcal A_d(\Sigma,n)$ be a uniformly chosen $d$-array of order $n$ over $\Sigma$.
	For a fixed array $M\in \mathcal A_d(\Sigma,k)$, we consider the random variable $X$ that counts the number of copies of $M$ in~$A$.
	Since there are $q^{k^d}$ $d$-arrays of order $k$ over $\Sigma$, it is enough to prove that
	$\probability[X=0]=o(q^{-k^d})$ 
	and then use a union bound in order to conclude.
	However, is not easy to prove this directly.
	Instead, we consider the random variable $Y$ which is the size of the maximum family of disjoint copies of~$M$ in~$A$.
	It is clear that $Y=0$ if and only if $X=0$.
	Therefore, it is enough to estimate $\probability[Y=0]$.
	The random variable $Y$ has the advantage that it is 1-Lipschitz, meaning that changing the value of one entry of the random array may change the value of $Y$ in at most~$1$.
	Therefore, we may use (a known consequence of) Talagrand's inequality in order to upper bound $\probability[Y=0]$. 
	However, to be able to use this tool, we need estimates on the expected number of pairs of copies of $M$ in $A$ which overlap in some entries, which amounts to studying the variance of $X$.
	Grasping the asymptotic behaviour of this variance turns out to be the most technical part of our proof.
	
	\begin{theorem}[Talagrand's inequality~{\cite[Theorem 7.7.1]{AlonSpencer2016}}]
		Let $\Omega=\prod_{i\in [r]}\Omega_i$ be a product probability space, with the product probability measure,
		and let $h:\Omega\to \mathbb R$ be a 1-Lipschitz function, that is, $|h(x)-h(y)|\le 1$ when $x$ and $y$ differ in at most one coordinate.
		For $f\colon \mathbb N \rightarrow \mathbb N$, suppose that $h$ is $f$-certifiable, that is, if $x\in\Omega$ is such that $h(x)\ge s$ then there exists a set $I\subseteq [r]$ of size at most $f(s)$ such that if a vector $y\in \Omega$ coincides with $x$ on $I$, then $h(y)\ge s$.
		Then for $Y(x)=h(x)$ and all $b,t$, we have
		\[\probability[Y\le b-t\sqrt{f(b)}]\cdot \probability[Y\ge b]\le e^{-t^2/4}.\]
	\end{theorem}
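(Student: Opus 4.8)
The plan is to deduce the statement from the geometric core of Talagrand's method, the concentration inequality for the \emph{convex distance}, and to let the ``$1$-Lipschitz plus $f$-certifiable'' hypotheses do nothing more than convert a lower tail for $Y$ into a large convex distance. For a measurable $A\subseteq\Omega$ and $x\in\Omega$, define
\begin{align}
  \rho_T(A,x)=\sup_{\substack{\alpha\in\mathbb R_{\ge 0}^{r}\\ \|\alpha\|_2\le 1}}\ \inf_{y\in A}\ \sum_{i\,:\,x_i\ne y_i}\alpha_i .
  \label{equation:convexdistance}
\end{align}
The backbone of the proof is the abstract estimate
\begin{align}
  \probability[A]\cdot\probability[\,\rho_T(A,\cdot)\ge t\,]\le e^{-t^2/4},
  \label{equation:abstract}
\end{align}
valid for every measurable $A$ in an arbitrary finite product probability space and every $t\ge 0$. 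Granting \eqref{equation:abstract}, the theorem is immediate; the entire difficulty sits in \eqref{equation:abstract}, which I treat last. We may assume $t\ge 0$, which is the only regime used.

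I would first carry out the \emph{reduction}. Fix $b$ and $t\ge 0$ and set $A=\{y\in\Omega:\ Y(y)\le b-t\sqrt{f(b)}\}$; the goal is the inclusion $\{x:\ Y(x)\ge b\}\subseteq\{x:\ \rho_T(A,x)\ge t\}$. Take any $x$ with $Y(x)\ge b$. By $f$-certifiability at level $s=b$ there is $I\subseteq[r]$ with $|I|\le f(b)$ such that every $z$ agreeing with $x$ on $I$ has $Y(z)\ge b$ (if $I=\emptyset$ then $A=\emptyset$ and there is nothing to prove). Let $\alpha$ be the unit vector with $\alpha_i=|I|^{-1/2}$ on $I$ and $\alpha_i=0$ elsewhere. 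For $y\in A$ put $J=\{i\in I:\ x_i\ne y_i\}$ and let $z$ arise from $y$ by resetting the coordinates of $J$ to their values in $x$; then $z$ agrees with $x$ on all of $I$, so $Y(z)\ge b$, while $z$ and $y$ differ in exactly $|J|$ coordinates, so the Lipschitz hypothesis gives $b\le Y(z)\le Y(y)+|J|\le b-t\sqrt{f(b)}+|J|$ and hence $|J|\ge t\sqrt{f(b)}$. Therefore
\[
  \sum_{i\,:\,x_i\ne y_i}\alpha_i\ \ge\ \sum_{i\in J}\alpha_i\ =\ \frac{|J|}{\sqrt{|I|}}\ \ge\ \frac{t\sqrt{f(b)}}{\sqrt{f(b)}}\ =\ t,
\]
and taking the infimum over $y\in A$ gives $\rho_T(A,x)\ge t$. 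Thus $\probability[Y\ge b]\le\probability[\rho_T(A,\cdot)\ge t]$, and multiplying by $\probability[A]=\probability[Y\le b-t\sqrt{f(b)}]$ and applying \eqref{equation:abstract} finishes this part.

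It remains to prove \eqref{equation:abstract}, which I would obtain from the stronger moment bound
\begin{align}
  \int_{\Omega} e^{\rho_T(A,x)^2/4}\,d\mu(x)\ \le\ \frac{1}{\mu(A)}
  \label{equation:moment}
\end{align}
via Markov's inequality applied to $e^{\rho_T(A,\cdot)^2/4}$. I would prove \eqref{equation:moment} by induction on the number of coordinates $r$. The base case $r=1$ amounts to the one-variable inequality $p+(1-p)e^{1/4}\le 1/p$ for $p=\mu(A)\in(0,1]$, since then $\rho_T(A,x)\in\{0,1\}$. For the inductive step I would write $\Omega=\Omega'\times\Omega_r$ and, for $\omega\in\Omega_r$, consider the slice $A_\omega=\{x'\in\Omega':(x',\omega)\in A\}$ and the projection $B=\{x'\in\Omega':(x',\omega)\in A\text{ for some }\omega\}$. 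The geometric input is the interpolation bound
\[
  \rho_T\bigl(A,(x',\omega)\bigr)^2\ \le\ (1-\lambda)^2+\lambda\,\rho_T(A_\omega,x')^2+(1-\lambda)\,\rho_T(B,x')^2\qquad(\lambda\in[0,1]),
\]
which follows from \eqref{equation:convexdistance} by splitting the last coordinate off and optimizing the weight between ``matching'' and ``changing'' it. Integrating $e^{\rho_T^2/4}$ over $x'$, applying H\"older with exponents $1/\lambda$ and $1/(1-\lambda)$, and invoking the induction hypothesis for both $A_\omega$ and $B$ inside $\Omega'$ reduces the step to the real-variable lemma $\inf_{0\le\lambda\le1}e^{(1-\lambda)^2/4}\,\theta^{-\lambda}\le 2-\theta$ for $0<\theta\le1$, used with $\theta=\mu'(A_\omega)/\mu'(B)$; integrating over $\omega$ and using $\int_{\Omega_r}\mu'(A_\omega)\,d\mu_r(\omega)=\mu(A)$ together with the trivial inequality $(\mu'(B)-\mu(A))^2\ge0$ closes the induction.

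The main obstacle is precisely this analytic core: verifying the interpolation inequality for $\rho_T$ and the closing real-variable lemma, and arranging the H\"older step so that the two induction hypotheses recombine into the factor $1/\mu(A)$. By contrast the reduction is elementary and is the only place where the Lipschitz and certifiability hypotheses are used; once \eqref{equation:abstract} is available, the theorem is a short deduction.
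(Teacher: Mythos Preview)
The paper does not prove this theorem at all: it is quoted verbatim from Alon and Spencer's textbook (the citation \cite[Theorem~7.7.1]{AlonSpencer2016}) and used as a black box in the proof of Theorem~\ref{thm:2}. There is therefore no ``paper's own proof'' to compare against.

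That said, your sketch is the standard derivation of this form of Talagrand's inequality, and indeed essentially the argument in Alon--Spencer: one first establishes the abstract convex-distance inequality \eqref{equation:abstract} via the exponential-moment bound \eqref{equation:moment}, proved by induction on the number of factors with a H\"older/interpolation step and the one-variable lemma $\inf_{0\le\lambda\le1}e^{(1-\lambda)^2/4}\theta^{-\lambda}\le 2-\theta$; then the $1$-Lipschitz and $f$-certifiable hypotheses are used exactly as you do, picking the uniform unit vector supported on the certifying set $I$, to show $\{Y\ge b\}\subseteq\{\rho_T(A,\cdot)\ge t\}$ for $A=\{Y\le b-t\sqrt{f(b)}\}$. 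The reduction is clean and correct; the only genuine work is the inductive proof of \eqref{equation:moment}, which you have outlined accurately but not carried out in full.
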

	
	\begin{proof}[Proof of Theorem~\ref{thm:2}]
		Let $d,q, k\geqslant 2$ be fixed, $\eps > 0$.
		Let $n=(1+\eps)\frac{k}{e} q^{ \frac{k^{d-1}}{d}}$.
		Recall that we are interested in the asymptotic behaviour when $d, q$ are fixed and $k$ tends to infinity.
		In particular, we can assume whenever necessary that $k$ is sufficiently large with respect to $d, q, \eps$.
		All asymptotic notation is with respect to $k$ tending to infinity.
		
		Let $M\in \mathcal A_d(\Sigma,k)$ be a fixed $d$-array of order $k$ over the $q$-symbol alphabet $\Sigma$,
		and let $A$ be a uniformly chosen array from $\mathcal A_d(\Sigma,n)$.
		Our aim is to find a good upper bound on the probability that $A$ does not contain $M$, i.e., one allowing us to use a union bound to prove the result.
		
		Let $\mathcal T$ denote the collection of subsets of $[n]^d$ of the form $T=T_1\times\dotsb\times T_d$, where $|T_i|=k$ for each $1\le i\le d$.
		Given $T\in\mathcal T$, let $T(A)$ be the subarray of $A$ with entries $a_{\mathbf{i}}$ and $\mathbf{i} \in T$.
		Let $X_T$ be the indicator function of the event that $T(A)$ induces a copy of $M$, and let $X=\sum_{T\in \mathcal T}X_T$ be the number of copies of $M$ in $A$.
		Since for every $T\in\mathcal T$ we have $\expectation[X_T]=q^{-k^d}$, by linearity of the expectation then we have 
		\begin{equation}\label{equation:mu}
			\mu:=\expectation[X]=\binom{n}{k}^d q^{-k^d}=(1+o(1))(2\pi k)^{-d/2} (1+\eps)^{dk}\ge (16  \log q) k^{2d} ,
		\end{equation}
		where the last equality follows from the choice of $n$ and~\eqref{equation:stirling}, while the inequality follows from the assumption that $k$ is large.
		
		It will be crucial to show that we have
		\begin{equation}\label{equation:variance2}
			\variance(X)\le (1+o(1))\mu.
		\end{equation}
		To this end, we investigate (the expectation of) the random variable \[ Z = \sum_{T,T'} X_T X_{T'}, \]
		where the sum ranges over the pairs of distinct $T, T' \in \mathcal{T}$ which intersect in at least one cell. 
		For a vector $\mathbf{i} = (i_1,\dots,i_d) \in [k]^d$, we write  
		\[\Delta_{\mathbf{i}}=\sum_{T,T'\in\mathcal T_{\mathbf{i}}}\expectation[X_TX_{T'}],\]
		where $\mathcal T_{\mathbf{i}}$ denotes the collection of indices $T,T'\in\mathcal T$ such that $|T_j \cap T'_j| = i_{j}$ for all $j \in [d]$.
		Equivalently, $T$ and $T'$ intersect on exactly $i_j$ indices on the $j$-th coordinate.
		Therefore, if $\Delta = \expectation[Z]$, and $\mathbf{k} = (k, k, \dotsc, k)$ then we have
		\begin{align}
			\Delta= \sum_{T,T'} \expectation[ X_T X_{T'} ] = \sum_{\mathbf{i} \in [k]^d \setminus \{ \mathbf{k} \}}\Delta_{\mathbf{i}}.\label{equation:Delta}
		\end{align}
		Given $i\in[k]$, we define 
		\[\Lambda_i=\binom{n}{k}\binom{k}{i}\binom{n-k}{k-i} \hspace{.5cm}\text{and}\hspace{.5cm} L_d(i)=q^{\frac{i^{d}}{d}\left(1-({k}/{i})^{d-1}\right)}\frac1{(k-i)!}\binom{k}i\left(\frac{(1+\eps)k}e\right)^{k-i}. \]
		In order to prove \eqref{equation:variance2} we will use the following two claims.
		
		\begin{claim}\label{claim:Delta}
			For all $\mathbf{i} \in [k]^d$ we have
			\[\frac{\Delta_{\mathbf{i}}}{\mu}\le\prod_{j\in[d]}L_{d}(i_j).\]
		\end{claim}
		\noindent
		\emph{Proof of Claim~\ref{claim:Delta}.}
		Let $\mathbf{i} = (i_1, \dotsc, i_d)$.
		First, note that the total number of pairs $T, T'$ which intersect on $i_j$ entries on the $j$-th coordinate is exactly equal to $\prod_{j\in[d]}\Lambda_{i_j}$.
		Moreover, the union of two subarrays $T$ and $T'$ of this type together span exactly $2 k^d - i_1 \dotsb i_d$ cells.
		Then $X_T X_{T'} = 1$ holds if and only if in each one of those cells the correct symbol is attained, which implies
		\[ \Delta_{\mathbf{i}} \leq q^{-(2k^d-i_1 \dotsb i_d)} \prod_{j\in[d]}\Lambda_{i_j}. \]
		By the AM-GM inequality applied to $i_1^d, \dotsc, i_d^d$,
		we have $i_1 \dotsb i_d \le (\sum_{j=1}^d i_j^d) / d$.
		Thus we have
		\[\dfrac{\Delta_{\mathbf{i}}}{\mu} \leq \dfrac{q^{-(2k^d-i_1 \dotsb i_d)}\prod_{j\in[d]}\Lambda_{i_j}}{\binom{n}{k}^dq^{-k^d}}\le q^{-k^d+\frac{1}{d} \sum_{j\in[d]}i_j^d}\prod_{j\in[d]}\binom{k}{i_j}\binom{n}{k-i_j}.\]
		Using that $\tbinom{n}{k-i}\le n^{k-i}/(k-i)!$ and replacing $n=(1+\eps)\frac keq^{k^{d-1}/d}$ we have
		\[\begin{array}{ccl}\dfrac{\Delta_{\mathbf{i}}}{\mu}&\le&\displaystyle q^{-k^d+\frac1d\sum_{j\in[d]}i_j^d}\prod_{j\in[d]}\binom{k}{i_j}\dfrac{n^{k-i_j}}{(k-i_j)!}\\
			&\le & \displaystyle \prod_{j\in[d]}q^{\frac1d(i_j^d-k^d)}\frac1{(k-i_j)!}\binom{k}{i_j}\left(\frac{(1+\eps)k}e\right)^{k-i_j}q^{\frac1dk^{d-1}(k-i_j)}\\
			&=&\displaystyle \prod_{j\in[d]}L_{d}(i_j),
		\end{array} \]
		as desired.\hfill $\lrcorner$
		
		\begin{claim}\label{claim:correlations}
			If $1\le i\le k-1$, then $L_d(i)=o(k^{-d})$.
		\end{claim}
		\noindent
		\emph{Proof of Claim~\ref{claim:correlations}.}
		We will show that there exists $c > 0$ such that, for all $i$, $L_d(i) \leq e^{-ck}$ holds, which clearly yields the claim.
		
		Without loss of generality we may assume that $\eps\le (\log q)/8$, as otherwise we can restrict to a smaller array. Setting $i=k-j$ and by the Bernoulli inequality $(1 + j/(k-j))^{d-1} \ge 1 + j(d-1)/(k-j)$ we have
		\begin{align*}
			L_d(k-j) & = \displaystyle q^{\frac{(k-j)^d}d(1-(1+j/(k-j))^{d-1})}\frac{1}{j!}\binom{k}{j}\left(\frac{(1+\eps)k}e \right)^j \\
			& \le \displaystyle q^{-\frac{d-1}dj(k-j)^{d-1}}\frac{1}{j!}\binom{k}{j}\left(\frac{(1+\eps)k}e \right)^j
			\le \displaystyle q^{-\frac{d-1}dj(k-j)^{d-1}}\binom{k}{j}\left(\frac{(1+\eps)k}j \right)^j,
		\end{align*}
		where in the last step we used \eqref{equation:binomialsimpleupperbound} to bound $j!$.
		
		Taking logarithms,
		using $d \geq 2$,
		and later using $\log(1 + \eps) \leq \eps$, we obtain
		\begin{align}
			\log L_d(k-j)
			& \leq - \frac{1}{2} j (k-j) \log q + j \log \left( \frac{(1+\eps)k}{j} \right) + \log \binom{k}{j} \nonumber \\
			& \leq - \frac{1}{2} j (k-j) \log q + j ( \eps + \log k - \log j ) + \log \binom{k}{j} \label{equation:logLd}
		\end{align}
	Now, let $\beta \in (0,1/2)$ be small enough so that \[ \log\left(\frac{1}{1 - \beta}\right) + 2 \log(2) H(\beta) \leq \frac{1}{16} \log q \] holds.
		We now split the proof into two cases.
		Assume first that we have $j \leq (1 - \beta) k$.
		In this case, we use \eqref{equation:binomialsimpleupperbound} to bound the binomial coefficient and $k - j \geq \beta k$ in \eqref{equation:logLd} to get
		\begin{align*}
			\log L_d(k-j)
			& \leq - \frac{1}{2} j \beta k \log q + j ( \eps + \log k ) + j ( \log k + 1 - \log j ) \\
			& \leq j \left[ - \frac{1}{2} \beta k \log q + \eps + 2 \log k + 1 \right].
		\end{align*}
		For $k$ large enough, we have $\eps + 2 \log k + 1 \leq \frac{1}{4} \beta k \log q$,
		so the term in the brackets is negative.
		Using $j \geq 1$ we finally get $\log L_d(k-j) \leq - \frac{1}{4} \beta k \log q$,
		or equivalently $L_d(k-j) \leq \exp\left( - \frac{1}{4} \beta k \log q \right)$, which finishes the proof in this case.
		
		Now, consider the remaining case $j > (1 - \beta) k$.
		Since $\beta < 1/2$, we have $\binom{k}{j} \leq \binom{k}{(1 - \beta)k} = \binom{k}{ \beta k}$.
		Using \eqref{equation:entropy}, we get $\log \binom{k}{j} \leq (1 + o(1)) \log(2) H(\beta) k \leq 2 \log(2) H(\beta) k$.
		We plug this in \eqref{equation:logLd} to get 
		\begin{align*}
			\log L_d(k-j)
			& \leq - \frac{1}{2} (k-1) \log q + j ( \eps + \log k -\log j ) + 2 \log(2) H(\beta) k \\
			& \leq - \frac{1}{2} (k-1) \log q + j ( \eps - \log (1 - \beta ) ) + 2 \log(2) H(\beta) k \\
			& \leq k \left[ - \frac{1}{4} \log q + \eps - \log(1 - \beta) + 2 \log(2) H(\beta) \right],
		\end{align*}
		where in the last step we used $j \leq k$ and $(k-1)/2 \geq k/4$, which holds since $k$ is large.
		Now we use the assumption $\eps \leq (\log q) / 8$ and the choice of $\beta$ to get 
		\begin{align*}
			\log L_d(k-j)
			& \leq k \left[ - \frac{1}{8} \log q - \log(1 - \beta) + 2 \log(2) H(\beta) \right] \leq - k \frac{\log q}{16}.
		\end{align*}
		Thus in this case we have $L_d(k-j) \leq \exp( - k \frac{\log q}{16} )$, which finishes the proof of the claim.
		%
		\hfill $\lrcorner$
		\vskip .1 in
		
		Since the sum in~\eqref{equation:Delta} is over all the $k^d - 1$ many tuples $\mathbf{i}$ in $[k]^d$ distinct from $(k,\dots,k)$, then Claim~\ref{claim:Delta} and Claim~\ref{claim:correlations} together imply that
		\begin{align}
			\Delta =o(\mu).\label{equation:covariance}\end{align}
		Now, since $X$ is a sum of zero-one random variables, we have
		\[\variance(X) \le \expectation[X]+\sum_{T,T'\in\mathcal T} \covariance(X_T,X_{T'}).
		\]
		In the sum we only need to consider the pairs $T, T' \in \mathcal{T}$ with non-trivial intersection (otherwise the variables $X_T, X_{T'}$ are independent and thus their covariance is zero).
		Further, we have $\covariance(X_T,X_{T'})\le \expectation(X_TX_{T'})$.
		Therefore, by~\eqref{equation:covariance} we have
		\begin{align*}
			\variance (X)\le \mu+\Delta= (1+o(1))\mu,
		\end{align*}
		and so we have finally proved \eqref{equation:variance2}.
		
		By Chebyshev's inequality, and equations~\eqref{equation:mu} and~\eqref{equation:variance2} we have 
		\[\probability[|X-\mu|\ge \tfrac 14\mu]\le \frac{16\variance(X)}{\mu^2}\le \frac{32}{\mu}\to 0.\]
		Therefore, $X\ge 3\mu/4$ with probability at least $3/4$.
		Likewise, by Markov's inequality and~\eqref{equation:covariance} we have
		\[\probability[Z\ge \mu / 5] \le \frac{ 5 \expectation[Z]}{\mu} = \frac{5 \Delta}{\mu}\to 0,\]
		and therefore $Z\le \mu/4$ with probability at least $3/4$. In particular, both events hold at the same time with probability at least $1/2$. 
		
		Let $Y$ denote the random variable which is the size of the maximum family of disjoint copies of $M$ in $A$.
		Since $X\ge 3\mu/4$ and $Z\le \mu/4$ hold simultaneously with probability at least $1/2$, then, by conditioning on this event, we deduce that 
		\begin{equation}\label{equation:muhalf}
			\probability[Y\ge \mu/2]\ge 1/2.
		\end{equation} Notice also that $X = 0$ if and only if $Y = 0$.
		
		We are now ready to use Talagrand's inequality to finish the proof.
		Note that $h(A):=Y$ is $1$-Lipschitz, since by switching the value of one entry one can add or remove at most 1 copy of $M$ (the one using that entry).
		Moreover, $h(A)$ is $f$-certifiable for $f(s)=sk^d$.
		Using $b=\mu/2$ and $t=k^{-d/2}\sqrt{\mu/2}$, Talagrand's inequality and \eqref{equation:muhalf} give us
		\[\probability[X=0] = \probability[Y=0]\le 2e^{-\mu k^{-d}/8}.\]
		Finally, we use a union bound over all the possible choices of $M \in \mathcal A_d(\Sigma,k)$, to deduce that the probability that $A$ is not $k$-universal is at most
		\[2q^{k^d}e^{-\mu k^{-d}/8}\le 2q^{k^d}e^{- 2k^{d}\log q}=2q^{-k^{d}}\to 0,\]
		where the inequality comes from~\eqref{equation:mu}. The result follows.
	\end{proof}
	
	\begin{remark}
		The constant error term $\eps$ in Theorem~\ref{thm:2} can be improved to $\Omega(\log k / k)$.
		This can be seen by checking that replacing $\eps = C \log k / k$ (with $C$ being a large constant) is enough for \eqref{equation:mu} to hold. This does not change the rest of the calculations.
	\end{remark}
	
	\section{Concluding remarks}\label{seclast}
	
	\subsection{Uniqueness of subwords}
	One could wonder whether $k$-universal words can contain every subword exactly once, just as De Bruijn sequences contain every substring exactly once.
	However, this is not the case for essentially every value of~$k$. 
	
	\begin{proposition} Let $k\ge 2$ and $q\ge 2$ be integers, and $w$ a $k$-universal word over the $q$-symbol alphabet.
		There exists a word $u$ of length $k$ such that $w$ contains at least two copies of $u$.
	\end{proposition}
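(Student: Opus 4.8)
The plan is to argue by a short double count, contradicting the assumption that every length-$k$ subword of $w$ occurs at most once. First I would invoke Theorem~\ref{theorem:universalwords-deterministic}: since $w$ is $k$-universal over a $q$-symbol alphabet, its length $n$ satisfies $n \ge f_1(q,k) = qk$.

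Next I would count the occurrences of length-$k$ subwords by \emph{position set}. Each $k$-element subset $I \subseteq [n]$ yields, upon reading the letters of $w$ indexed by $I$ in increasing order, a word $w_I \in \Sigma^k$, and a ``copy'' of a word $u \in \Sigma^k$ in $w$ is precisely such an $I$ with $w_I = u$. There are exactly $\binom{n}{k}$ such subsets and only $q^k$ possible values $w_I$, so if no word of length $k$ had two copies in $w$, the assignment $I \mapsto w_I$ would be injective and hence $\binom{n}{k} \le q^k$.

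It then remains only to verify $\binom{n}{k} > q^k$. Using monotonicity of binomial coefficients together with $n \ge qk$, this reduces to $\binom{qk}{k} > q^k$; writing $\binom{qk}{k} = \prod_{i=0}^{k-1} \frac{qk-i}{k-i}$, every factor is at least $q$ (since $qk-i \ge q(k-i)$ is equivalent to $(q-1)i \ge 0$), and the factor for $i=1$ --- which is present because $k \ge 2$ --- is strictly larger than $q$ since $q \ge 2$. This yields the desired contradiction and completes the proof.

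There is essentially no obstacle here; the only point requiring a little care is the strict inequality $\binom{qk}{k} > q^k$, and this is exactly where the hypotheses $k \ge 2$ and $q \ge 2$ enter (for instance, when $q = k = 2$ one has $\binom{4}{2} = 6 > 4$, and indeed the minimal $2$-universal word $1212$ already contains three copies of the subword $12$). If one prefers to avoid the strict inequality, an equivalent route is to note that $\binom{n}{k} = q^k$ would force both $n = qk$ and the injectivity of $I \mapsto w_I$, the latter failing because, e.g., several position sets read off a constant word.
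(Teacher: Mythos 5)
Your proof is correct and follows essentially the same route as the paper: lower-bound the length by $qk$ via Theorem~\ref{theorem:universalwords-deterministic}, count the $\binom{n}{k}$ position sets against the $q^k$ possible words, and verify $\binom{qk}{k} > q^k$ by factoring the binomial coefficient (the paper phrases the conclusion via pigeonhole rather than injectivity, which is the same thing). Only your optional closing aside is shaky --- in a $k$-universal word of length exactly $qk$ each symbol occurs exactly $k$ times, so a constant word need not have several position sets --- but that remark is not needed, since your main argument already establishes the strict inequality.
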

	
	
	\begin{proof}
		Since $w$ is $k$-universal, Theorem~\ref{theorem:universalwords-deterministic} implies $w$ has at least $qk$ symbols.
		Thus $w$ contains at least $\binom{qk}{k}$ subwords of length $k$.
		For all $1 \leq i < k$ we have $(qk-k+i)/i > q$ and thus
		\[ \binom{qk}{k} = \frac{qk (qk-1) \dotsb (qk - k + 1)}{k\cdot(k-1)!} = q \prod_{i=1}^{k-1} \frac{qk-k+i}{i} > q^k, \]
		so by the pigeonhole principle there is a word of length $k$ that appears at least twice as a subword in $w$.
	\end{proof}
	
	The same proof together with~\eqref{equation:stirling} yields that, for $q$ fixed and $k$ large, for every universal word $w$ there is a word of length $k$ contained at least $\binom{qk}{k} q^{-k} = 2^{\Omega(k)}$ times in $w$.
	
	Notice that the lower bound~\eqref{equation:lower}  for $f_d(q, k)$ when $d \ge 2$, follows from computing the size of a $k$-universal $d$-array containing every array of order $k$ exactly once. So perhaps studying the uniqueness of subarrays in universal $d$-arrays could help in obtaining even tighter bounds for these functions.  
	
%
	
	\subsection{Explicit universal $d$-arrays}
	
	In Theorem~\ref{theorem:universalwords-deterministic} we establish that $f_1(q,k)=qk$ by giving an explicit construction of a minimal universal word. 
	In contrast, in Theorem~\ref{thm:2} we establish the upper bound on $f_d(q,k)$ for $d\ge2$, by showing that random $d$-arrays of that order are likely to be $k$-universal. 
	It would be interesting to find ``explicit'' constructions of almost optimal universal arrays, for all relevant parameters.
	The simplest open question in this regard would be the following.
	\begin{question}
		 Is it possible to give an explicit description of a $k$-universal $2$-array on $2$ symbols of optimal (or close to optimal) order?
	\end{question}	
	In the setting of $d$-uniform hypergraphs, Alon and Sherman~\cite{alon2019} gave an explicit construction of $d$-graphs on $\Theta\left(2^{\binom{k}{d}/d}\right)$ vertices which contain every $k$-vertex $d$-graph as an induced subhypergraph,
	which is optimal up to the implied constant.

	\subsection{Universal random higher-dimensional permutations}
	For $\ell\ge 1$ an integer, let $S_\ell$ be the set of all permutations of $[\ell]$.
	For $n\ge k\ge 1$, we say that a permutation $\sigma\in S_n$ \emph{contains} a permutation $\tau\in S_k$ if there are indices $1\le i_1<\dots<i_k\le n$ such that for all $j,\ell \in [k]$,  $\sigma(i_j)<\sigma(i_\ell)$ if and only if $\tau(j)<\tau(\ell)$.
	A \emph{$k$-universal} permutation is one that contains all permutations of $S_k$.
	The question of the minimal $n$ such that there exists a $k$-universal permutation in $S_n$ was asked by Arratia~\cite{arratia}. Through a simple counting argument he observed that such an $n$ must satisfy $n\ge (1+o(1))k^2/e^2$ and conjectured that this was the optimal value of $n$ given $k$. The trivial bound which motivated this conjecture (now known to be false) was recently improved by
 Chroman, Kwan and Singhal~\cite{CKS2021}, although only by a small constant factor.
	
	The random version of this problem was posed by Alon (see~\cite{arratia}) who conjectured that a random permutation of order $(1+o(1))k^2/4$ is $k$-universal with high probability.
	If true, this bound would be tight, as can be deduced from the known results on the length of the longest increasing subsequence of random permutations.
	The best known upper bound for this problem is due to He and Kwan~\cite{xekwan}, who recently proved that a random permutation on $O(k^2\log\log k)$ elements is $k$-universal with high probability. 
	
	The study of higher dimensional permutations is ripe for further research.
	A \emph{line} of a $d$-array $A = (a_{i_1, \dotsc, i_d})$ of order $n$ is a sequence of elements obtained by choosing some $j \in [n]$ and looking at the entries $a_{i_1, \dotsc, i_{j-1}, \ell, i_{j+1}, \dotsc, i_d}$, for some fixed $i_1, \dotsc, i_{j-1}, i_{j+1}, \dotsc, i_d \in [n]$ and $\ell$ ranging from $1$ to $n$.
	Just as a usual permutation can be identified with a permutation matrix, it possible to define a \emph{$d$-dimensional permutation} (henceforth, \emph{$d$-permutation}) of order $n$ as a $(d+1)$-array of order $n$ over $\{0,1\}$, where each line contains a unique $1$ entry (see~\cite{LinialLuria2014, LinialSimkin2018} for equivalent definitions and discussion).
	
	Looking for connections with the case of permutations, we propose the following notion of ``universality'' for $d$-permutations.
	A \emph{$d$-pattern} of order $k$ is a sequence $(\sigma_1, \dotsc, \sigma_d)$  where $\sigma_\ell \in S_k$ for all $\ell\in [d]$.
	We say a $d$-permutation $M$ of order $n$ \emph{contains} a $d$-pattern of order $k$ if there exists a sequence $x^{(1)}, \dotsc, x^{(k)} \in [n]^{d+1}$ of index vectors such that $M_{x^{(i)}_1 x^{(i)}_2 \dotsb x^{(i)}_{d+1}} = 1$ for all $i \in [k]$, $x^{(1)}_1 < x^{(2)}_1 < \dotsb < x^{(k)}_1$
	(the first coordinates of the vectors are increasing), and further,
	for each $\ell \in [d]$ and all $i,j \in [k]$, it holds that $x^{(i)}_{\ell + 1} < x^{(j)}_{\ell + 1}$ if and only if $\sigma_\ell(i) < \sigma_\ell(j)$.
	Note that for $d = 1$ this is equivalent to the containment of one permutation in another.
	We say a $d$-permutation $M$ is \emph{$k$-pattern-universal} if it contains all $d$-patterns of order $k$.
	
	Linial and Simkin~\cite{LinialSimkin2018} considered ``monotone subsequences of length~$k$'' in $d$-permutations, which expressed in our language correspond to $d$-patterns of order~$k$ of the form $(\sigma, \dotsc, \sigma)$, where $\sigma$ is the identity function.
	They showed that the longest monotone subsequence in a random $d$-permutation of order $n$ has length $\Theta(n^{d/(d+1)})$ with high probability.
	This implies that a random $d$-permutation needs to have order at least $\Omega(k^{(d+1)/d})$ to be $k$-pattern-universal with high probability.
	In analogy with the case of permutations, we believe this to be tight.
	
	\begin{conjecture}For $d\ge 2$, there exists a constant $C>0$ such that a random $d$-permutation of order $Ck^{(d+1)/d}$ is $k$-pattern-universal with high probability as $k\to\infty$.	\end{conjecture}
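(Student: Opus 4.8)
We first note that the lower bound matching the conjecture --- namely, that a random $d$-permutation of order $o(k^{(d+1)/d})$ is \emph{not} $k$-pattern-universal with high probability --- is already implied by the result of Linial and Simkin~\cite{LinialSimkin2018} on longest monotone subsequences, applied to the $d$-pattern $(\mathrm{id},\dotsc,\mathrm{id})$. So the content of the conjecture is the upper bound: that for a sufficiently large constant $C=C(d)$, a uniformly random $d$-permutation $A$ of order $n=Ck^{(d+1)/d}$ contains every $d$-pattern of order $k$ with high probability. The plan would be to adapt the strategy behind Theorem~\ref{thm:2}. Fix one $d$-pattern $P=(\sigma_1,\dotsc,\sigma_d)$ of order $k$, let $X=X(P)$ be the number of copies of $P$ in $A$, and aim to bound $\probability[X=0]$ strongly enough that a union bound over all $(k!)^d$ patterns still tends to $0$; since $\log\big((k!)^d\big)=(1+o(1))\,dk\log k$, it suffices to show $\probability[X=0]=e^{-\omega(k\log k)}$.

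Concretely I would proceed in four steps. \textbf{(1) Expectation.} A potential copy of $P$ is a tuple $(I_1,\dotsc,I_{d+1})$ of $k$-subsets of $[n]$ which, together with $P$, selects $k$ entries of $A$ that pairwise differ in each of the $d+1$ coordinates; by the symmetry of the uniform distribution on $d$-permutations the probability $p_k$ that such a ``generic'' $k$-tuple of entries all equal $1$ is independent of the choice, so $\expectation[X]=\binom{n}{k}^{d+1}p_k$. One then needs a bound of the form $p_k\ge (c_d/n)^k$ for some constant $c_d>0$; granting this, $\expectation[X]\ge (c_dC^d)^k$, which is exponentially large once $C>c_d^{-1/d}$. \textbf{(2) Second moment.} Writing $Z=\sum X_TX_{T'}$ over ordered pairs of distinct copies that are not (approximately) independent, one needs $\Delta:=\expectation[Z]=o(\expectation[X])$, via a case analysis over the ways two copies can intersect. \textbf{(3) Concentration.} As in Theorem~\ref{thm:2}, combine Chebyshev's inequality (using $\variance(X)=o(\expectation[X]^2)$) with Markov's inequality applied to $Z$ to conclude that the maximum number $Y$ of pairwise disjoint copies of $P$ satisfies $Y\ge\expectation[X]/2$ with probability bounded away from $0$; then, writing $Y$ as a $1$-Lipschitz, $O(sk)$-certifiable function of an exposure process for $A$, apply a Talagrand/Azuma-type inequality for such processes to get $\probability[X=0]=\probability[Y=0]\le e^{-\Omega(\expectation[X]/k)}$. \textbf{(4) Union bound.} Since $\expectation[X]/k$ is exponential in $k$ it dominates $dk\log k$, and the union bound over the $(k!)^d$ patterns finishes the argument.

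Two points separate this plan from a proof, both stemming from the fact that a random $d$-permutation --- equivalently (Linial--Luria~\cite{LinialLuria2014}), a $d$-dimensional analogue of a random Latin square, a Latin square itself when $d=2$ --- is very far from a product probability space. First, Steps (1) and (2) need quantitative control of the joint distribution of entries. Unlike the i.i.d.\ arrays of Theorem~\ref{thm:2}, here two copies with disjoint entry sets are still correlated as soon as they share a coordinate value, and in our range of parameters almost every pair of copies does share coordinate values; so the variance bound cannot rely on an independence dichotomy, and instead hinges on strong quasirandomness statements of the form $\probability[\text{the entries of }E\text{ are all }1]=(1+o(1))\,n^{-|E|}$ for every ``consistent'' configuration $E$ whose conflict pattern is locally sparse, followed by a case analysis over the intersection types of $T$ and $T'$ (shared entries and shared coordinate values) analogous to Claims~\ref{claim:Delta} and~\ref{claim:correlations}. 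Such estimates are available for random Latin squares in suitable ranges via recent work, but establishing them for $d\ge 3$, and for $k$ as large as $\Theta(n^{d/(d+1)})$, would likely require new input; I expect this to be the main obstacle. Second, Step (3) requires a concentration inequality for $1$-Lipschitz functions of random $d$-dimensional Latin structures, since Talagrand's inequality as stated applies only to product spaces; for $d=2$ one can hope to use switching/exposure-martingale arguments of the kind developed for random Latin squares and random permutations (in the spirit of He and Kwan~\cite{xekwan}), but making this work uniformly in $d$ is an additional point to be settled. A purely combinatorial alternative --- constructing a copy of $P$ greedily on most of $A$ and absorbing the discrepancy using the remaining freedom --- would run into the same correlation issues, so the quasirandomness input seems hard to avoid.
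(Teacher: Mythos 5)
The statement you were asked about is a \emph{conjecture}: it appears in the concluding remarks and the paper offers no proof of it, so there is no ``paper proof'' to compare against, and what you have written is (by your own account) a program rather than an argument. The two obstacles you flag are not loose ends that a more careful write-up would tidy; they are precisely the content of the open problem. A uniformly random $d$-permutation is a random Latin-square-like object, not a product space, so the entire engine of the proof of Theorem~\ref{thm:2} --- the exact computation $\expectation[X_T]=q^{-k^d}$, the independence of $X_T,X_{T'}$ for disjoint $T,T'$ that kills most covariance terms in Claims~\ref{claim:Delta} and~\ref{claim:correlations}, and Talagrand's inequality itself --- is unavailable. Your Step (1) already needs a joint-distribution estimate $p_k=(1+o(1))n^{-k}$ (or at least $p_k\ge (c_d/n)^k$) for $k$ as large as $\Theta(n^{d/(d+1)})$, i.e.\ for configurations occupying a polynomial fraction of the rows/columns in every direction; no such quasirandomness statement is known at that scale even for random Latin squares ($d=2$), let alone uniformly in $d$, and Step (2) needs it for overlapping pairs of configurations, which is harder still. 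Step (3) needs a Lipschitz concentration inequality for these non-product measures, which again is not off the shelf.

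A further reason to doubt that this route closes the gap: even the $d=1$ analogue (Alon's conjecture that a random permutation of order $(1+o(1))k^2/4$ is $k$-universal) is open, and the best known bound, $O(k^2\log\log k)$ by He and Kwan~\cite{xekwan}, required substantially different ideas from a second-moment-plus-concentration argument --- despite the fact that for $d=1$ the relevant probability space (a uniform permutation) is far better understood than a random $d$-permutation. So while your outline correctly identifies the natural first-moment computation, the matching lower bound via Linial--Simkin~\cite{LinialSimkin2018}, and the right places where new input is needed, it should be presented as a plausible strategy for attacking the conjecture, not as a proof; as written, Steps (1)--(3) each rest on unproved (and currently out-of-reach) lemmas.
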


	\section*{Acknowledgements}
	
	MPS was partially supported by ANID Doctoral scholarship ANID-PFCHA/ Doctorado Nacional/2017-21171132. MPS and DAQ thankfully acknowledge support from Programa Regional MATH-AMSUD, MATH190013; from Concurso para Proyectos de Investigaci\'on Conjunta CONICYT Chile -- FAPESP Brasil, 2019/13364-7; and from ANID + PIA/Apoyo a Centros Cient\'ificos y Tecnol\'ogicos de Excelencia con Financiamiento Basal AFB170001. DAQ thankfully acknowledges support from FONDECYT/ANID Iniciaci\'on en Investigaci\'on Grant 11201251. NSM was supported by the Czech Science Foundation, grant number GA19-08740S with institutional support RVO: 67985807.
	
	We also would like to thank the anonymous referees for their careful reading and helpful comments.
	\bibliographystyle{amsplain}
	\bibliography{arrays}

\end{document}